\numberwithin{equation}{section}
\newtheorem{Definition}{Definition}[section]
\newtheorem{theorem}[Definition]{Theorem}
\newtheorem{corollary}[Definition]{Corollary}
\begin{document}
\title{\Large \bf  On some properties of enhanced power graph}
\author{ Sudip Bera and  A. K. Bhuniya  }
\date{}
\maketitle

\begin{center}
Department of Mathematics, Visva-Bharati, Santiniketan-731235, India. \\
sudipbera517@gmail.com,  anjankbhuniya@gmail.com
\end{center}

\begin{abstract}
 Given a group $G$, the enhanced power graph of $G$ denoted by $\mathcal{G}_e(G)$,  is the graph with vertex set $G$ and two distinct vertices $x, y$ are edge connected  in $\mathcal{G}_e(G)$ if there exists $z\in G $ such that $x=z^m$ and $  y=z^n $, for some $m,  n\in \mathbb{N}$. In this article,  we characterize the enhanced power graph  $\mathcal{G}_e(G)$ of $G$. The graph $\mathcal{G}_e(G)$ is complete if and only if $G$ is cyclic, and $\mathcal{G}_e(G)$ is Eulerian if  and only if $|G|$ is odd. We classify all abelian groups  and also all non-abelian $p-$groups $G$ for which $\mathcal{G}_e(G)$ satisfies the cone property.
\end{abstract}

\noindent
\textbf{Keywords:} groups; enhanced power graphs; planar; Eulerian; $p-$groups.
\\ \textbf{AMS Subject Classifications:} 05C25

\section{Introduction}
Given an algebraic structure $S$, we can associate this algebraic structure $S$ to a directed or undirected graph in different ways\cite{sen}, \cite{r}, \cite{sing}. To study algebraic structures using graph theory,  different graph has been formulated, namely power graph of semigroup \cite{sen}, strong power graph of group \cite{sing}, normal subgroup based power graph of group \cite{BB1},   ideal based zero divisor graph of a ring \cite{r} etc. The directed power graph of a semigroup was defined by Kelarev and Quinn \cite{K}. Then Chakraborty et. al defined the undirected power graph $\mathcal{G}(S)$ of a semigroup $S$, where the vertex set of the graph is $S$ and two distinct vertices $x, y$ are adjacent if either $x=y^m$ or $y=x^n$ for some $m, n\in\mathbb{N}$. Many researcher generalized the undirected power graph in different ways. Aalipour et. al \cite{pj} defined the enhanced power graph of a group $G$. The enhanced power graph of a group $G$,  is denoted by $\mathcal{G}_e(G)$,  is the graph whose vertex set is the group $G$ and two distinct vertices $x, y$ are edge connected if  there exists $z \in G$ such that $x=z^m$ and $y=z^n$ for some $m, n \in \mathbb{N}$. In Section $2$, some basic structure have been studied. From the definition it follows that $\mathcal{G}_e(G)$ is complete if $G$ is cyclic. Here we show that the converse also holds. The graph $\mathcal{G}_e(G)$ contains a cycle if and only if $o(a)\geq 3$, for some $a\in G$. As a consequence of this result, all finite group $G$ have been characterized such that $\mathcal{G}_e(G)$ is bipartite, tree, and star graph. In Section $3$, we classify all abelian groups $G$ such that $\mathcal{G}_e(G)$ satisfies the cone property (details later). If $G$ is a non-abelian group $p-$group, then $\mathcal{G}_e(G)$ satisfies the cone property if and only if $G$ is a generalized quarternion group. Complete characterization of the groups $G$ such that the graph $\mathcal{G}_e(G)$ is planar or  Eulerian have been done in Section $4$. Note that the identity element $e$ of the group $G$ is adjacent to every other vertex in $\mathcal{G}_e(G)$. Section $5$ is devoted to characterize the deleted enhanced power graph  $\mathcal{G}_e^*(G)$, a subgraph of $\mathcal{G}_e(G)$ obtained by deleting the vertex $e$.
\section{Definition and basic structure}
Throughout this article $G$ stand for a finite group. We denote $o(x)$ to be the order of an element $x$ in $G$, $\pi_e(G)=\{ o(x):x \in G\}$, $\pi(n)=\{ p\in \mathbb{N} : p|n$ is a prime,$n \in \mathbb{N}\}$  and $\pi(G)=\pi(|G|)$. Let $\mu(G)\subset \pi_e(G)$ be the set of all maximal element of $\pi_e(G)$ under the divisibility relation.

 For any vertex $a$ of the enhanced power graph $\mathcal{G}_e(G)$ we have $a, e \in <a>$.  So the graph $\mathcal{G}_e(G)$ is connected. Let $a\in G$. We denote  $\mathcal{G}en(a)$ to be the set of all generators of the cyclic subgroup  $<a>$ of $G$.  Then $G=\bigcup_{a \in G}\mathcal{G}en(a)$. Now we  show that the vertices  in  $\mathcal{G}en(a)$ form a clique in $\mathcal{G}_e(G)$ for every $a \in G$. In fact,  vertices $x, y$ in $\mathcal{G}en(a)$ we have $x=a^m$ and $y=a^n$ for some $m, n \in \mathbb{N}$. And hence $x\sim y$. Suppose that $\mathcal{G}en(a)\neq \mathcal{G}en(b)$. Let $x\in \mathcal{G}en(a)$, $y \in\mathcal{G}en(b)$ with $x \sim y$. Then there exists $z \in G$ such that $x, y \in <z>$. Now for any $x_1 \in \mathcal{G}en(a)$ and any $y_1 \in \mathcal{G}en(b)$ we have $<x>=<x_1>\subset <z>$ and $<y>=<y_1>\subset <z>$. Hence all the vertices in $\mathcal{G}en(a)$ are adjacent with all the vertices in $\mathcal{G}en(b)$.
\begin{theorem}
Let $a, b\in G$ with $o(a)=o(b)$ and $<a>\neq <b>$. Then none of the vertices  in $\mathcal{G}en(a)$ is adjacent with the vertices in $\mathcal{G}en(b)$ in $\mathcal{G}_e(G)$.
\end{theorem}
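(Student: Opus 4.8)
The plan is to argue by contradiction, the engine being the elementary structural fact that a finite cyclic group has exactly one subgroup of each order dividing its order. Suppose, contrary to the claim, that some $x \in \mathcal{G}en(a)$ is adjacent in $\mathcal{G}_e(G)$ to some $y \in \mathcal{G}en(b)$. By the definition of the enhanced power graph there is an element $z \in G$ with $x = z^m$ and $y = z^n$ for suitable $m, n \in \mathbb{N}$; in particular both $x$ and $y$ lie in the cyclic subgroup $\langle z \rangle$ of $G$.

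Next I would record the relevant orders. Since $x$ is a generator of $\langle a \rangle$ and $y$ is a generator of $\langle b \rangle$, we have $\langle x \rangle = \langle a \rangle$ and $\langle y \rangle = \langle b \rangle$, hence $o(x) = o(a)$ and $o(y) = o(b)$, and by hypothesis $o(a) = o(b) =: d$. Thus $\langle x \rangle$ and $\langle y \rangle$ are two subgroups of the finite cyclic group $\langle z \rangle$, each of order $d$ (so in particular $d \mid o(z)$).

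The crucial step is then to invoke uniqueness: inside a finite cyclic group $\langle z \rangle$, for each divisor $d$ of $o(z)$ there is a \emph{unique} subgroup of order $d$, namely $\langle z^{o(z)/d} \rangle$. Applying this to the common order $d$ forces $\langle x \rangle = \langle y \rangle$, and therefore $\langle a \rangle = \langle b \rangle$, contradicting the standing hypothesis $\langle a \rangle \neq \langle b \rangle$. Hence no vertex of $\mathcal{G}en(a)$ is adjacent to any vertex of $\mathcal{G}en(b)$.

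I do not foresee a genuine obstacle here; the only point deserving a word of justification is the uniqueness of the order-$d$ subgroup of $\langle z \rangle$, which is standard (from $o(z^i) = o(z)/\gcd(i,o(z))$ one sees that the elements of order dividing $d$ are precisely the powers $z^{(o(z)/d) j}$, and there are exactly $d$ of them, so they constitute the only subgroup of that order). It is also worth remarking at the outset that, by the discussion immediately preceding the statement, the existence of a single edge between $\mathcal{G}en(a)$ and $\mathcal{G}en(b)$ (these being distinct generator sets) would already force every vertex of $\mathcal{G}en(a)$ to be adjacent to every vertex of $\mathcal{G}en(b)$; so it suffices to exclude one such edge, which is exactly what the contradiction above accomplishes.
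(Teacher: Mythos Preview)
Your argument is correct and is essentially identical to the paper's own proof: assume an adjacency, pass to a common cyclic subgroup $\langle z\rangle$, observe that $o(x)=o(a)=o(b)=o(y)$, and invoke the uniqueness of subgroups of a given order in a finite cyclic group to force $\langle a\rangle=\langle b\rangle$. The extra remarks you supply (the explicit justification of uniqueness and the observation about the preceding discussion) are fine elaborations but do not change the route.
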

\begin{proof}
Suppose that $x\in \mathcal{G}en(a)$ and $y\in \mathcal{G}en(b)$ with $x\sim y$ in $\mathcal{G}_e(G)$. Then there exists $z\in G$ such that $x, y \in <z>$. Now $o(a)=o(b)$ implies that $o(x)=o(y)$. So the cyclic subgroup $<z>$ contains two distinct  subgroups of same order. This is  a contradiction. Hence the result follows.
\end{proof}
\begin{theorem}
 The enhanced power graph $\mathcal{G}_e(G)$ of the group $G$ contains a cycle if and only if $o(a)\geq 3$, for some $a\in G$.
\end{theorem}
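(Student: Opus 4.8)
The plan is to prove the two implications separately; the heart of the matter is already contained in the behaviour of cyclic subgroups recorded above.

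For the ``if'' direction, I would suppose $o(a)\geq 3$ for some $a\in G$ and simply exhibit a triangle. The elements $e$, $a$, $a^2$ are three pairwise distinct vertices (since $o(a)\notin\{1,2\}$), and all of them lie in the cyclic subgroup $\langle a\rangle$; taking $z=a$ in the definition shows that every pair among $\{e,a,a^2\}$ is an edge of $\mathcal{G}_e(G)$ — equivalently, since $\langle a\rangle$ is cyclic, $\mathcal{G}_e(\langle a\rangle)$ is complete, so these vertices span a $K_3$. Hence $e\sim a\sim a^2\sim e$ is a cycle, and $\mathcal{G}_e(G)$ contains a cycle.

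For the ``only if'' direction I would argue by contraposition: assume $o(a)\leq 2$ for every $a\in G$ and show that $\mathcal{G}_e(G)$ is acyclic. The key observation is that under this hypothesis every edge of $\mathcal{G}_e(G)$ is incident to the identity $e$. Indeed, if $x\sim y$ with $x\neq e\neq y$, then there is $z\in G$ with $x,y\in\langle z\rangle$; but $|\langle z\rangle|\leq 2$, so $\langle z\rangle$ contains at most one non-identity element, forcing $x=y$, a contradiction. Since $e$ is adjacent to every other vertex, this gives $\mathcal{G}_e(G)\cong K_{1,|G|-1}$, a star, which is a tree and therefore contains no cycle.

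There is no genuine obstacle here; the only points needing a moment of care are that a cycle in the simple graph $\mathcal{G}_e(G)$ necessarily has length at least $3$ (so producing the triangle really does suffice), and the elementary fact that $|\langle z\rangle|\leq 2$ leaves room for exactly one non-identity element. As a byproduct, the same dichotomy yields the advertised consequences: $\mathcal{G}_e(G)$ is acyclic $\iff$ it is a tree $\iff$ it is a star $\iff$ it is bipartite $\iff$ it is triangle-free $\iff$ $o(a)\leq 2$ for all $a\in G$, i.e.\ $G$ is an elementary abelian $2$-group (including the trivial group).
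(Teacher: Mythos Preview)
Your proof is correct and follows essentially the same approach as the paper: both exhibit a triangle through $e$ and two generators of $\langle a\rangle$ when $o(a)\geq 3$, and both show that if every element has order at most $2$ then no two non-identity vertices are adjacent, so the graph is a star. The only cosmetic difference is that the paper phrases the ``only if'' direction via the $\mathcal{G}en(a)$ partition and Theorem~2.1, whereas you argue directly that $|\langle z\rangle|\leq 2$ forces $x=y$; the underlying observation is the same.
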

\begin{proof}
First suppose that $\pi_e(G)\subset \{ 1, 2\}$. Then for every  $a\in G $, $\mathcal{G}en(a)$ contains exactly one element. Now by the discussion above of the Theorem $2.1$ and by the Theorem $2.1$,  the enhanced power graph $\mathcal{G}_e(G)$ has no cycle.

Conversely, suppose that $a\in G$ such that $o(a)\geq 3$. Then $|\mathcal{G}en(a)|\geq 2$. So the vertices in $\mathcal{G}en(a)$ with the identity form a cycle. Hence the result.
\end{proof}
If $G$ is a finite group such that $o(x)=2$ for every non-identity element $x$ of $G$, then $G$ is abelian and $G\cong \mathbb{Z}_2\times \mathbb{Z}_2 \times \cdots \times \mathbb{Z}_2$. Also a connected graph $G$ is tree if and only if  it has no cycle.
\begin{corollary}
Let $G$ be a group. Then the following conditions are equivalent.
\begin{enumerate}
\item
 $\mathcal{G}_e(G)$ is bipartite;
 \item
 $\mathcal{G}_e(G)$ is tree;
 \item
$G\cong \mathbb{Z}_2\times \mathbb{Z}_2 \times \cdots \times \mathbb{Z}_2$;
 \item
 $\mathcal{G}_e(G)$ is a star graph.
\end{enumerate}
\end{corollary}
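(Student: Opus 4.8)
The plan is to prove the cycle of implications $(1)\Rightarrow(3)\Rightarrow(4)\Rightarrow(2)\Rightarrow(1)$, using Theorem 2.2 together with the two elementary observations recorded just before the corollary: that a connected graph is a tree if and only if it is acyclic, and that a group all of whose non-identity elements have order $2$ is isomorphic to a direct product of copies of $\mathbb{Z}_2$.

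For $(1)\Rightarrow(3)$: a bipartite graph contains no odd cycle, in particular no triangle. If some $a\in G$ had $o(a)\ge 3$, then $\langle a\rangle$ would have at least two distinct generators, say $x$ and $y$ (for instance $a$ and $a^{-1}$, which are distinct precisely because $a^2\ne e$); since $\mathcal{G}en(a)$ is a clique of $\mathcal{G}_e(G)$ and the identity $e$ is adjacent to every other vertex, $\{e,x,y\}$ would span a triangle, a contradiction. Hence $o(x)\le 2$ for all $x\in G$, and the quoted observation gives $G\cong \mathbb{Z}_2\times \mathbb{Z}_2 \times \cdots \times \mathbb{Z}_2$.

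For $(3)\Rightarrow(4)$: here every non-identity element $x$ satisfies $o(x)=2$, so $\mathcal{G}en(x)=\{x\}$, and any two distinct non-identity elements $x,y$ generate distinct (cyclic) subgroups; Theorem 2.1 then shows $x$ is not adjacent to $y$. As $e$ is adjacent to all of $G\setminus\{e\}$, the graph $\mathcal{G}_e(G)$ is precisely the star $K_{1,|G|-1}$. Finally $(4)\Rightarrow(2)$ and $(2)\Rightarrow(1)$ are immediate, since every star is a tree and every tree is bipartite.

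I do not anticipate a genuine obstacle; the only step needing a word of care is the triangle construction in $(1)\Rightarrow(3)$, namely checking that a cyclic group of order $\ge 3$ really has two distinct generators so that the triangle with $e$ exists. Alternatively, one can bypass most of the work: since $\mathcal{G}_e(G)$ is always connected, $(2)$ is equivalent to ``$\mathcal{G}_e(G)$ is acyclic'', which by Theorem 2.2 holds if and only if $o(a)\le 2$ for every $a\in G$, i.e. if and only if $(3)$ holds; combined with the easy chain $(3)\Rightarrow(4)\Rightarrow(2)\Rightarrow(1)\Rightarrow(3)$ this yields the full equivalence.
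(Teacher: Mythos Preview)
Your argument is correct and in substance matches the paper's own proof: both use Theorem~2.1 for $(3)\Rightarrow(4)$ and deduce the equivalence of $(1)$, $(2)$, $(3)$ from the cycle criterion of Theorem~2.2 (you reprove that criterion explicitly in $(1)\Rightarrow(3)$, while the paper simply cites it). The only cosmetic difference is that you close the loop via the trivial implications $(4)\Rightarrow(2)\Rightarrow(1)$, whereas the paper proves $(4)\Rightarrow(3)$ directly by observing that in a star no two non-identity vertices are adjacent, forcing every non-identity element to have order~$2$.
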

\begin{proof}
Equivalences of $(1), (2)$ and $(3)$ follows directly from Theorem $2.2$.

$(3)\Rightarrow (4)$

 Let $G\cong \mathbb{Z}_2\times \mathbb{Z}_2\times \mathbb{Z}_2 \times\cdots \times \mathbb{Z}_2$. Then each non-identity element of $G$ is of order $2$. This implies that $\mathcal{G}en(a)=\{a\}$, for each non-identity element $a$ of the group $G$, and  $\mathcal{G}_e(G)$ is a star graph by Theorem $2.1$.

 $(4)\Rightarrow (3)$

  Suppose that $\mathcal{G}_e(G)$ is a star graph. The identity element $e$ is adjacent to all other vertices in   $\mathcal{G}_e(G)$. Hence no two non-identity elements adjacent in $\mathcal{G}_e(G)$. This is possible only when order of each non-identity element of $G$ is $2$. Hence $G\cong \mathbb{Z}_2\times \mathbb{Z}_2\times \mathbb{Z}_2 \times\cdots \times \mathbb{Z}_2$.
\end{proof}
A graph $\Gamma$ is complete if any two vertices of the graph $\Gamma$ are adjacent. Now we find the condition that the enhanced power graph $\mathcal{G}_e(G)$ is complete.
 \begin{theorem}
  The enhanced power graph $\mathcal{G}_e(G)$ of the group $G$ is complete if and only if $G$ is cyclic.
  \end{theorem}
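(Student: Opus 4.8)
The plan is to prove the two implications separately. The forward direction, that $G$ cyclic implies $\mathcal{G}_e(G)$ complete, is immediate from the definition: writing $G=\langle z\rangle$, every element of $G$ is a power of $z$, so any two distinct vertices $x=z^m$ and $y=z^n$ satisfy $x\sim y$ in $\mathcal{G}_e(G)$. (This was already noted in the introduction.)

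For the converse, suppose $\mathcal{G}_e(G)$ is complete. The key object is the family of maximal cyclic subgroups of $G$, that is, cyclic subgroups not properly contained in any larger cyclic subgroup. Since $G$ is finite, every element $g\in G$ lies in at least one such subgroup, so $G$ is the union of its maximal cyclic subgroups. I would then show this family has exactly one member. Let $\langle a\rangle$ and $\langle b\rangle$ be maximal cyclic subgroups and choose generators $x\in\mathcal{G}en(a)$ and $y\in\mathcal{G}en(b)$. By completeness $x\sim y$, so there is $z\in G$ with $x,y\in\langle z\rangle$; hence $\langle a\rangle=\langle x\rangle\subseteq\langle z\rangle$ and $\langle b\rangle=\langle y\rangle\subseteq\langle z\rangle$. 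Maximality of $\langle a\rangle$ and $\langle b\rangle$ forces $\langle a\rangle=\langle z\rangle=\langle b\rangle$. Thus $G$ has a unique maximal cyclic subgroup $M$, and since $G$ is the union of its maximal cyclic subgroups we get $G=M$, so $G$ is cyclic.

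I do not anticipate a serious obstacle here; the only point needing a little care is the justification that in a finite group every element lies inside some maximal cyclic subgroup (so that a unique maximal cyclic subgroup really forces cyclicity), which follows from a routine finiteness/maximality argument. An alternative route for the converse would be to argue by contradiction via Theorem $2.1$, but that result only directly produces a non-edge when $G$ contains two distinct cyclic subgroups of equal order, so one would still have to rule out separately the remaining configurations (for instance groups such as $Q_8$ having a unique subgroup of each prime order); the maximal-cyclic-subgroup argument sidesteps this case analysis entirely.
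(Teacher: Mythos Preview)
Your proof is correct and follows essentially the same approach as the paper: both directions agree, and for the converse both arguments exploit maximality to show that an edge $x\sim y$ with $x$ generating a maximal cyclic subgroup forces $y\in\langle x\rangle$. The only cosmetic difference is that the paper picks a single element $x$ of maximum order and argues by contradiction (any $y\notin\langle x\rangle$ would, via $x,y\in\langle z\rangle$ and $o(x)$ maximal, yield $\langle x\rangle=\langle z\rangle\ni y$), whereas you phrase the same step in terms of maximal cyclic subgroups and conclude directly that there is a unique one.
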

 \begin{proof}
 Suppose that the group $G$ is cyclic. Then $G=<a>$, for some $a\in G$. Let $x$ and $y$ be two distinct vertices  of $\mathcal{G}_e(G)$. Then $x, y\in G=<a>$, which implies that $x\sim y$ in $\mathcal{G}_e(G)$. Hence the graph $\mathcal{G}_e(G)$ is complete.

 Conversely, suppose that the graph  $\mathcal{G}_e(G)$ is complete. If possible $G$ is non-cyclic.  Let $x$ be an element of $G$ such that $o(x)$ is maximum  and $H=<x>$. Now $G\neq H$ implies that there exists $y\in G\setminus H$. Since the graph $\mathcal{G}_e(G)$ is complete, so $x\sim y$ in $\mathcal{G}_e(G)$. Then there exists $z \in G$ such that $x, y \in <z>$. Since $o(x)$ is maximum, we have $<x>=<z>$, which leads to contradiction that  $y \in <x>$. Hence $G$ is cyclic group.
 \end{proof}

 \section{Cone property of $\mathcal{G}_e(G)$}
 In this section we classify all abelian groups and non-abelian $p-$groups $G$ such that $\mathcal{G}_e(G)$ has a vertex other than the identity element, which is adjacent to every vertex. In the context of power graphs, this property  has been studied in \cite{Cameran}, \cite{Cameran 2}. For the sake of smoothness in discussion, we call such a vertex a cone vertex and a graph having a cone vertex is said to satisfy the cone property.
\begin{theorem}
Let $G$ be a finite group and $n\in \mathbb{N}$. If $gcd(|G|, n)=1$, then the enhanced power graph $\mathcal{G}_e({G\times \mathbb{Z}_n})$ has a cone vertex.
\end{theorem}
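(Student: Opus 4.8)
The plan is to exhibit an explicit cone vertex. Let $e$ denote the identity of $G$ and let $g$ be a generator of $\mathbb{Z}_n$; I claim that the vertex $v=(e,g)$ of $\mathcal{G}_e(G\times \mathbb{Z}_n)$ is a cone vertex. (We may assume $n\geq 2$, so that $v$ is a genuine non-identity vertex; for $n=1$ there is nothing new to prove beyond what $G$ itself gives.) The underlying idea is that, since $|G|$ and $n$ are coprime, for every $a\in G$ the cyclic subgroup $\langle (a,g)\rangle$ of $G\times\mathbb{Z}_n$ is large enough to contain $v$ and the arbitrary element $(a,h)$ simultaneously.

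First I would record the arithmetic input: if $a\in G$ then $o(a)$ divides $|G|$, so $\gcd(o(a),n)=1$. Consequently the element $(a,g)\in G\times\mathbb{Z}_n$ has order $\operatorname{lcm}(o(a),n)=o(a)\cdot n$, and in fact $\langle (a,g)\rangle=\langle a\rangle\times\mathbb{Z}_n$ as a subgroup of $G\times\mathbb{Z}_n$, because the internal direct product of two cyclic subgroups of coprime order is cyclic and is generated by the pair of generators. In particular this subgroup does not depend on which generator $g$ of $\mathbb{Z}_n$ is chosen.

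Next, fix an arbitrary vertex $(a,h)$ with $(a,h)\neq v$. By the previous paragraph both $(a,h)\in\langle a\rangle\times\mathbb{Z}_n=\langle (a,g)\rangle$ and $v=(e,g)\in\langle a\rangle\times\mathbb{Z}_n=\langle (a,g)\rangle$. Hence, taking $z=(a,g)$, both $v$ and $(a,h)$ are powers of $z$, so $v\sim (a,h)$ in $\mathcal{G}_e(G\times\mathbb{Z}_n)$. As $(a,h)$ was arbitrary, $v$ is adjacent to every other vertex, i.e.\ it is a cone vertex. If one prefers to avoid the structural statement about direct products of cyclic groups, exactly the same conclusion follows from the Chinese Remainder Theorem applied with $g=\bar 1$: choose $k$ with $k\equiv 1\pmod{o(a)}$ and $k\equiv h\pmod n$ so that $(a,g)^k=(a,h)$, and choose $\ell$ with $\ell\equiv 0\pmod{o(a)}$ and $\ell\equiv 1\pmod n$ so that $(a,g)^\ell=v$.

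There is essentially no serious obstacle here; the only points requiring a little care are the degenerate case $n=1$ (where the assertion is only as strong as the corresponding assertion for $G$) and the check that $v$ is a non-identity vertex, which is exactly the hypothesis $n\geq 2$. All the actual content of the proof is concentrated in the coprimality $\gcd(o(a),n)=1$, which is what makes $\langle (a,g)\rangle$ big enough to witness every required adjacency.
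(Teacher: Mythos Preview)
Your proof is correct and follows essentially the same approach as the paper: both exhibit $(e,g)$ with $g$ a generator of $\mathbb{Z}_n$ as the cone vertex, and both verify adjacency to an arbitrary $(a,h)$ by showing that $(a,h)$ and $(e,g)$ lie together in the cyclic subgroup $\langle (a,g)\rangle$, using $\gcd(o(a),n)=1$. Your version is in fact slightly more careful than the paper's, since you flag the degenerate case $n=1$ (where the claim can fail) and spell out the CRT computation explicitly.
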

\begin{proof}
We show that $(e, a)$ is a cone vertex of the graph $\mathcal{G}_e(G)$, where $a$ is a generator of $\mathbb{Z}_n$. Let $(g, x)\in G\times\mathbb{Z}_n$ and $x=a^m$. Then $gcd(o(g), o(a))=1$ implies that $(g, 0), (e, a)\in <(g, a)>$ and so $(g, x)=(g, 0)(e, a)^m\in <(g, a)>$. Thus $(g, x), (e, a)\in <(g, a)>$ and hence $(g, x)\sim (e, a)$ in $\mathcal{G}_e(G)$.
\end{proof}
Now we characterize all finite abelian groups $G$ for which  the enhanced power graph $\mathcal{G}_e(G)$ has a cone vertex. Let us fix a notation $[m]=\{ 1, 2, 3, \cdots, m\}$
\begin{theorem}
Let $G$ be a finite abelian group. Then $\mathcal{G}_e(G)$ has a cone vertex if and only if $G$ has a cyclic Sylow subgroup.
\end{theorem}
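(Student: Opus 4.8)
The plan is to reduce adjacency in $\mathcal{G}_e(G)$ to a statement about cyclicity of $2$-generated subgroups and then to split off the primary components of $G$.

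First I would record that, since $G$ is abelian, $x\sim y$ in $\mathcal{G}_e(G)$ if and only if $\langle x,y\rangle$ is cyclic: one implication is the definition (take $z$ to be a generator of $\langle x,y\rangle$), the other is immediate. Next, writing $G=P_1\times\cdots\times P_k$ with $P_i$ the Sylow $p_i$-subgroup, and $c=(c_1,\dots,c_k)$, $x=(x_1,\dots,x_k)$, I would verify the identity $\langle c,x\rangle=\langle c_1,x_1\rangle\times\cdots\times\langle c_k,x_k\rangle$; this holds because a finite abelian group is the internal direct product of its Sylow subgroups, so the projection of $\langle c,x\rangle$ onto $P_i$ coincides with $\langle c,x\rangle\cap P_i$ and hence equals $\langle c_i,x_i\rangle$. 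As these factors have pairwise coprime orders, $\langle c,x\rangle$ is cyclic precisely when each $\langle c_i,x_i\rangle$ is cyclic (using the Chinese Remainder Theorem one way, and that subgroups of cyclic groups are cyclic the other). Therefore a non-identity vertex $c$ is a cone vertex of $\mathcal{G}_e(G)$ if and only if, for every $i$, the subgroup $\langle c_i,y\rangle$ is cyclic for all $y\in P_i$.

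For the sufficiency direction this reformulation is not even needed: if some $P_i$ is cyclic, set $n=|P_i|$, so that $P_i\cong\mathbb{Z}_n$ and $G\cong H\times\mathbb{Z}_n$ with $H=\prod_{j\ne i}P_j$ of order coprime to $n$, and Theorem~3.1 immediately supplies a cone vertex. For necessity, suppose $c$ is a cone vertex and pick $j$ with $c_j\ne e$; I claim $P_j$ is cyclic. If not, fix a decomposition $P_j\cong\mathbb{Z}_{p^{e_1}}\times\cdots\times\mathbb{Z}_{p^{e_r}}$ with $r\ge2$; since $c_j\ne e$ it has a nonzero coordinate $u_\ell$ in some position $\ell$, and if $y\in P_j$ is a generator of a different factor $\mathbb{Z}_{p^{e_m}}$ (with $m\ne\ell$), then projection onto the $\ell$-th and $m$-th coordinates carries $\langle c_j,y\rangle$ onto $\langle u_\ell\rangle\times\mathbb{Z}_{p^{e_m}}$, which is a non-cyclic $p$-group. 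Hence $\langle c_j,y\rangle$ is non-cyclic, and taking $x\in G$ with $j$-th coordinate $y$ and all other coordinates $e$ (this $x$ clearly differs from both $e$ and $c$) contradicts the criterion established above. Thus $P_j$ is cyclic, so $G$ has a cyclic Sylow subgroup.

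The step I expect to demand the most care is the coordinate-wise reduction: justifying $\langle c,x\rangle=\prod_i\langle c_i,x_i\rangle$ for elements of a direct product of coprime-order groups, together with the elementary $p$-group fact — a $2$-generated abelian $p$-group is cyclic exactly when its Frattini quotient has order at most $p$ — which is what lets me deduce non-cyclicity of $\langle c_j,y\rangle$ from the surjection onto $\langle u_\ell\rangle\times\mathbb{Z}_{p^{e_m}}$. Everything else is routine bookkeeping.
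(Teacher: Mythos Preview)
Your argument is correct. The forward direction coincides with the paper's (both invoke Theorem~3.1 after writing $G\cong H\times\mathbb{Z}_{p^m}$), but the converse is organized differently. The paper argues by contradiction from the assumption that \emph{no} Sylow subgroup is cyclic: it writes out the full primary decomposition, takes a maximal-order element $v'$ (with a generator in the largest cyclic factor at each prime and zeros elsewhere), and uses $v\sim v'$ together with maximality of $o(v')$ to force $v\in\langle v'\rangle$; this pins down the shape of $v$, after which an order-$p$ element in a different cyclic factor of the same prime is shown to be non-adjacent to $v$. You instead first prove the reduction lemma $\langle c,x\rangle=\prod_i\langle c_i,x_i\rangle$ and the equivalence ``$x\sim y\Leftrightarrow\langle x,y\rangle$ cyclic'', which turns the cone condition into a purely local condition inside each Sylow subgroup; the contradiction is then found entirely within a single non-cyclic $P_j$ via a projection argument. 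Your route avoids the global maximal-order trick and is more modular (the per-prime criterion is a reusable tool), while the paper's approach is more concrete and needs no preliminary lemma. One small remark: the Frattini-quotient comment at the end is more than you need --- it suffices that $\langle u_\ell\rangle\times\mathbb{Z}_{p^{e_m}}$ is a direct product of two nontrivial $p$-groups (hence non-cyclic) and that quotients of cyclic groups are cyclic.
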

\begin{proof}
Suppose that $G$ has a cyclic Sylow subgroup $P$ and $|P|=p^m, p$ is a prime and $m \in \mathbb{N}$ . Then by  the fundamental theory  of finite  abelian group $G=H\times \mathbb{Z}_{p^m}$, where $m \in \mathbb{N}, H$ is subgroup of the  group $G$, $p$ is prime and $gcd(|H|, p^m)=1$. Then by the previous theorem $\mathcal{G}_e(G)$ satisfies the cone property.

Conversely, suppose that the group $G$ has no cyclic Sylow subgroups.  Then we show that $\mathcal{G}_e(G)$ does not satisfy the cone property. First we note that under the assumption $G$ is of the form $$G=\mathbb{Z}_{p^{t_{11}}_1}\times \mathbb{Z}_{p^{t_{12}}_1}\times\mathbb{Z}_{p^{t_{13}}_1}\times\cdots\times\mathbb{Z}_{p^{t_{1k_1}}_1}\times \mathbb{Z}_{p^{t_{21}}_2}\times\mathbb{Z}_{p^{t_{22}}_2}\times\cdots\times\mathbb{Z}_{p^{t_{2k_2}}_2}\times\cdots\times
\mathbb{Z}_{p^{t_{r1}}_r}\times\mathbb{Z}_{p^{t_{r2}}_r}\times\cdots\times\mathbb{Z}_{p^{t_{rk_r}}_r}$$, where $k_i\geq2 $ and $t_{i1}\leq t_{i2}\leq\cdots\leq t_{ik_i} $, for all $i\in [r]$. Suppose $\mathcal{G}_e(G)$ has a cone element $v=(x_{p^{t_{11}}_1}, x_{p^{t_{12}}_1}, \cdots x_{p^{t_{1k_1}}_1}, x_{p^{t_{21}}_2}, \cdots, x_{p^{t_{2k_2}}_2}, \cdots, x_{p^{t_{1r}}_r}, x_{p^{t_{2r}}_r}, \cdots, x_{p^{t_{rk_r}}_r} )$. Now we claim that for each $i\in [r]$ and for each $j\in [k_i-1], x_{p^{t_{i1}}_i}=x_{p^{t_{i2}}_i}=\cdots=x_{p^{t_{ij}}_i}=0$. [Here $0$ actually means the additive identity of the group $\mathbb{Z}_{p^{t_{ij}}_i}$].  Consider the element $\acute{v}=(0, 0, \cdots, 0, g_{p^{t_{1k_1}}_1}, 0, 0, \cdots, g_{p^{t_{2k_2}}_2}, 0, \cdots, 0, g_{p^{t_{rk_r}}_r} )$, where  $g_{p^{t_{ik_i}}_i}$ are generators of the cyclic group $\mathbb{Z}_{p^{t_{ik_i}}_i}$ for each $i\in[r]$. Since $v$ is a cone element of the graph $\mathcal{G}_e(G)$, we have $v\sim \acute{v}$ in $\mathcal{G}_e(G)$. Now $\acute{v}$ is a maximum ordered element of the group $G$ implies that $v\in<\acute{v}>$.  So for each $i\in [r]$ and for each $j\in [k_i-1], x_{p^{t_{i1}}_i}=x_{p^{t_{i2}}_i}=\cdots=x_{p^{t_{ij}}_i}=0$, i. e. $v=(0, 0, \cdots, 0, x_{p^{t_{1k_1}}_1}, 0, 0, \cdots, 0, x_{p^{t_{2k_2}}_2}, 0, \cdots, 0, x_{p^{t_{rk_r}}_r} )$. Since $v$ is a non-identity element of the group $G$, atleast one of the $x_{p^{t_{ik_i}}_i}$ is non-zero. Without loss of generality we assume that $x_{p^{t_{1k_1}}_1}\neq 0$. Consider $v_1=(x, 0, 0, \cdots, 0, 0)$, where $x\in \mathbb{Z}_{p^{t_{ik_1}}_1}$ with $o(x)=p_1$. Then $p_1|o(v)$.  Now we show that $v$ is not adjacent to $v_1$.  Now $v\sim v_1$ implies that there exists a cyclic subgroup $C$ of $G$ such that $v, v_1\in C$. Then $o(v_1)=p_1$ and $p_1$ divides $o(v)$ implies that $v_1\in<v>$, which contradicts that $x\neq 0$. Hence the theorem.
\end{proof}

Now we turn our attention to the non-abelian groups. First note that, from the Theorem $3.1$ we get an infinite family of non-abelian groups which satisfy the cone property but none of these groups are $p-$groups. So the next natural question  that occurs is to classify all non-abelian $p-$groups $G$, whose enhanced power graph $\mathcal{G}_e(G)$ satisfy the cone property. The next theorem completely answers the question.
\begin{theorem}
Let $G$ be a non-abelian $p-$group. Then the enhanced power graph $\mathcal{G}_e(G)$ satisfies the cone property if and only if $G$ is generalized quarternion group.
\end{theorem}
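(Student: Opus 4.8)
The plan is to funnel both implications through one classical structural fact: a finite $p$-group has a unique subgroup of order $p$ if and only if it is cyclic, or $p=2$ and it is a generalized quaternion group. I will use this alongside the basic reformulation, already exploited in the preceding sections, that two distinct vertices of $\mathcal{G}_e(G)$ are adjacent exactly when they lie together in some cyclic subgroup of $G$.

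For the sufficiency, assume $G$ is a generalized quaternion group. Such a group has a unique element $z$ of order $2$; since $G$ is a $2$-group, every nontrivial subgroup has even order and hence contains an element of order $2$, which must be $z$. Therefore $z \in \langle x \rangle$ for every $x \in G \setminus \{e\}$, so $z \sim x$ for all such $x$, and trivially $z \sim e$. Thus $z$ is a cone vertex and $\mathcal{G}_e(G)$ satisfies the cone property.

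For the necessity I would argue the contrapositive. Let $G$ be a non-abelian $p$-group that is not a generalized quaternion group; in particular $G$ is not cyclic, so by the structural fact $G$ contains two distinct subgroups $A$ and $B$ of order $p$. Suppose, for contradiction, that $v$ is a cone vertex of $\mathcal{G}_e(G)$ with $v \neq e$, and fix generators $a$ of $A$ and $b$ of $B$. Since $v \sim a$, there is a cyclic subgroup $C_1$ of $G$ with $v, a \in C_1$. As $\langle v \rangle \leq C_1$ and $C_1$ is cyclic, $\langle v \rangle$ is a cyclic $p$-group whose unique subgroup of order $p$ is also the unique subgroup of order $p$ of $C_1$; since $a \in C_1$ has order $p$, this forces $A = \langle a \rangle$ to be that subgroup. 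Running the same argument with $b$ in place of $a$ shows that $B$ is likewise the unique subgroup of order $p$ of $\langle v \rangle$. Hence $A = B$, contradicting their choice, so no cone vertex exists.

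The only genuine input is the classification of $p$-groups with a unique subgroup of order $p$, which is standard and can be quoted from standard references on finite groups; once that is in hand, both directions are short and amount to bookkeeping. The one place to take mild care is the degenerate case $o(v) = p$, where $\langle v \rangle$ itself plays the role of the unique subgroup of order $p$ in the argument above, but the reasoning is unchanged.
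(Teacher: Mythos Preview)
Your proof is correct and follows essentially the same approach as the paper: both directions hinge on the classical fact that a finite $p$-group with a unique subgroup of order $p$ is cyclic or generalized quaternion, and both reduce the cone property to this uniqueness by observing that any element of order $p$ adjacent to a cone vertex $v$ must lie in $\langle v\rangle$. The only cosmetic difference is that you phrase the necessity as a contrapositive and spell out the passage through the unique order-$p$ subgroup of the ambient cyclic group, whereas the paper argues directly and a little more tersely.
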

\begin{proof}
First assume that $G$ is a generalized quarternion group. So $G$ has a unique minimal subgroup say $H=<x>, x \in G$. And clearly $x$ is of order $p$.  We show that $x$ is a cone element. Let $y\in G$. Then $o(y)=p^t, t \in \mathbb{N}$. Since $H$ is unique minimal subgroup we have $x\in<y>$. Hence $x\sim y$ in $\mathcal{G}_e(G)$.

Conversely, suppose that $\mathcal{G}_e(G)$ has a cone element $x$.  We show that,  every order $p$ element belongs to the group $<x>$. For, suppose that $v\in G$ is of order $p$. Since $v\sim x$, $v, x\in C$, $C$ cyclic and consequently $v\in <x>$. Hence $G$ has a unique subgroup of order $p$, i. e. $G$ has a unique minimal subgroup and hence  $G$ is isomorphic to a generalized quarternion group \cite{gorestin}.
\end{proof}
\begin{theorem}
Let $G$ be any simple group. Then $\mathcal{G}_e(G)$ does not satisfy the cone property.
\end{theorem}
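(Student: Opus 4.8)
The plan is to mimic the converse direction of Theorem $3.3$. Note first that a simple abelian group is $\mathbb{Z}_p$, which is cyclic, so $\mathcal{G}_e(G)$ is complete (Theorem $2.4$) and \emph{every} non-identity vertex is a cone vertex; hence the statement is meaningful only for non-abelian simple $G$, and I would state it that way. So assume $G$ is non-abelian simple and, for contradiction, that $\mathcal{G}_e(G)$ has a cone vertex $x \neq e$. Recall that $x \sim y$ in $\mathcal{G}_e(G)$ precisely means that $x$ and $y$ lie in a common cyclic subgroup $<z>$ of $G$; in particular then $<x> \subseteq <z>$.

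Fix a prime $p$ dividing $o(x)$ and let $<a>$ be the unique subgroup of order $p$ of the cyclic group $<x>$. The key step is to show that every element $v \in G$ with $o(v) = p$ lies in $<a>$. Since $x$ is a cone vertex, $v \sim x$, so there is $z \in G$ with $v, x \in <z>$; then $<z>$ is cyclic and contains $<v>$ together with $<a>$ (because $a \in <x> \subseteq <z>$), and both of these have order $p$. As a cyclic group has at most one subgroup of each order, $<v> = <a>$, hence $v \in <a>$. Consequently $<a>$ is the unique subgroup of $G$ of order $p$.

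It follows that $<a>$ is normal in $G$: for every $g \in G$ the conjugate $g<a>g^{-1}$ is again a subgroup of order $p$, hence equals $<a>$. But $G$ is simple, so $<a> = \{e\}$ or $<a> = G$; the first is impossible since $|<a>| = p$, and the second would make $G$ cyclic of order $p$, contradicting non-abelianness. This contradiction finishes the proof.

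I do not expect a genuine obstacle here, since the argument runs parallel to Theorem $3.3$; the only point requiring attention is that one must use the non-abelianness of $G$ to ensure the unique subgroup of order $p$ is a \emph{proper} subgroup, so that simplicity can be invoked. (Alternatively one could quote the structure theorem that a finite group with a unique subgroup of order $p$ has cyclic or generalized quaternion Sylow $p$-subgroups, but the direct conjugation argument above is shorter and needs no case distinction on $p$.)
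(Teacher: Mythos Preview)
Your argument is correct and follows essentially the same route as the paper's: fix a prime $p$ dividing the order of the cone vertex, show that every element of order $p$ lies in the cyclic subgroup generated by the cone vertex (so there is a unique subgroup of order $p$), observe that this subgroup is normal, and obtain a contradiction with simplicity. Your observation that the statement as written fails for $G=\mathbb{Z}_p$ is valid --- the paper tacitly assumes $G$ is non-abelian and does not verify that the normal subgroup obtained is proper, which is exactly the gap you identified and closed.
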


\begin{proof}
Let, if possible $\mathcal{G}_e(G)$ satisfy the cone property. Suppose that, $v$ is a cone element. Let $o(v)= m$ and $p| m, p$ prime. We claim that, there exists a unique subgroup of order $p$. Let $x \in G$ such that $o(x)= p$. We show that $x \in <v>$. Since $v$ is cone element, $x\sim v$. So there exists cyclic subgroup $C$ such that $x,v \in C$. Hence $x \in <v>$. So there exists a unique subgroup of order $p$, which is normal in $G.$ Contradiction.
\end{proof}

We leave the problem of complete classification of non-abelian groups, whose enhanced power graphs satisfy the cone property as an interesting open problem.

\section{Eulerian and planar graph}
In this section we characterize the  groups $G$  such that the graph $\mathcal{G}_e(G)$ is Eulerian and planar.  A graph $\Gamma$ is called Eulerian if it has a closed trail containing all the vertices of $\Gamma$. An useful equivalent characterization of an Eulerian graph is that a graph $\Gamma$ is Eulerian if and only if every  vertex of $\Gamma$ is of even degree.
A graph $\Gamma$ is called  planer if it can be drawn in a plane so that  no two edges intersect. A graph is planer if and only if it does not contain a graph which is isomorphic to either of the graphs $K_{3, 3}$ and $K_5$.
\begin{theorem}
Let $G$ be a group. Then the enhanced power graph $\mathcal{G}_e(G)$ is planar if and only if $\pi_e(G)\subset \{ 1, 2, 3, 4\}$.
\end{theorem}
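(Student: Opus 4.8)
The plan is to establish the two implications separately. One direction is immediate: if $\pi_e(G)\not\subseteq\{1,2,3,4\}$, choose $a\in G$ with $o(a)=n\ge 5$; all $n$ elements of the cyclic subgroup $\langle a\rangle$ lie in a common cyclic subgroup (namely $\langle a\rangle$), so they are pairwise adjacent and $\mathcal{G}_e(G)$ contains $K_n$, hence $K_5$, and is therefore not planar. So it remains to show that $\pi_e(G)\subseteq\{1,2,3,4\}$ forces $\mathcal{G}_e(G)$ to be planar, and this is where the work lies.

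Assume $\pi_e(G)\subseteq\{1,2,3,4\}$, so every cyclic subgroup of $G$ has order $1,2,3$ or $4$. The key step is to describe exactly which non-identity vertices are adjacent. If distinct non-identity $x,y$ satisfy $x\sim y$, then $x,y\in\langle z\rangle$ for some $z$ with $o(z)\in\{2,3,4\}$; checking the three cases shows that either (i) $\langle x\rangle=\langle y\rangle$ with $o(x)=o(y)\in\{3,4\}$, or (ii) one of $x,y$ has order $2$ and is the square of the other, which has order $4$. The point to be careful about is that generators of two \emph{distinct} cyclic subgroups of order $4$ are never adjacent, since a common cyclic overgroup would have to coincide with both. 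It follows that the deleted graph $\mathcal{G}_e^*(G)$ is a disjoint union of: isolated vertices (order-$2$ elements that are the square of no order-$4$ element); copies of $K_2$ (the two generators of each cyclic subgroup of order $3$); and windmill graphs $F_k$, each consisting of a central order-$2$ vertex $w$ together with $k\ge 1$ triangles $\{w,y_i,y_i^{3}\}$, one for each order-$4$ subgroup $\langle y_i\rangle$ with $y_i^{2}=w$.

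Next I would put back the identity $e$, which is adjacent to every vertex, and check planarity one component at a time. Adjoining $e$ to an isolated vertex gives $K_2$; to a copy of $K_2$ gives $K_3$; and to a windmill $F_k$ gives a ``book'' of $k$ copies of $K_4$ all sharing the common edge $\{e,w\}$, which is planar: draw the $k$ triangles $\{e,w,y_i\}$ as the pages of a book on the spine $ew$, and in the face bounded by the $i$-th page insert $y_i^{3}$, joined to the three boundary vertices $e,w,y_i$. Since distinct components of $\mathcal{G}_e^*(G)$ are vertex-disjoint, the corresponding planar pieces of $\mathcal{G}_e(G)$ pairwise meet only in $e$; and a union of connected planar graphs that pairwise share exactly one vertex is planar (embed each with that vertex on the outer face, then arrange the embeddings in disjoint angular sectors about it). Hence $\mathcal{G}_e(G)$ is planar.

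The main obstacle is the adjacency case analysis yielding (i)--(ii) and the resulting precise decomposition of $\mathcal{G}_e^*(G)$ -- in particular the verification that order-$4$ elements organise only into disjoint triangles around their common squares, rather than into some larger clique-like gadget that could hide a $K_5$ or $K_{3,3}$. Once that structural description is secured, the remaining planarity bookkeeping (the book of $K_4$'s and the wedge at $e$) is routine.
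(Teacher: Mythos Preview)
Your proof is correct and follows essentially the same approach as the paper: both directions argue that an element of order $\ge 5$ yields a $K_5$, and for the converse both analyze adjacencies by element order (order-$2$ vs.\ order-$3$, order-$3$ vs.\ order-$4$, and Theorem~2.1 for equal orders) to see that $\mathcal{G}_e(G)$ is built from copies of $K_4$ and smaller cliques glued at $e$. Your write-up is in fact more thorough than the paper's, which stops at the non-adjacency observations and asserts planarity; your explicit windmill/book-of-$K_4$'s decomposition and embedding supply the details the paper leaves to the reader.
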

\begin{proof}
Suppose the graph $\mathcal{G}_e(G)$ is planar.  If $G$ has an element $x$ such that $o(x)\geq 5$. Then the vertices in $<a>$ forms a clique of size  $ \geq 5$ in $\mathcal{G}_e(G)$. Hence $\mathcal{G}_e(G)$ has a subgraph isomorphic to $K_5$ and so $\mathcal{G}_e(G)$ can not be planar.

Conversely, suppose that $\pi_e(G)\subset \{ 1, 2, 3, 4\}$. Let $a, b \in G$ such that $o(a)=2$ and $o(b)=3$. Since there are no element of order $6k, k\in \mathbb{N}$ in $G$, $a$ is not adjacent to $b$ in $\mathcal{G}_e(G)$. Similarly if $x_1, y_1\in G$ such that $o(x_1)=4$ and $o(y_1)=3$, then $x_1$ is not edge connected to $y_1$. Again any element $x$ of $G$ of order $4, e, x, x^2, x^3,$ form a complete subgraph $K_4$ of $\mathcal{G}_e(G)$. Now by the Proposition $2.1$ and by the above discussion $\mathcal{G}_e(G)$ is planar graph.
\end{proof}
\begin{theorem}
Let $G$ be a group of order $n$. Then the enhanced power graph $\mathcal{G}_e(G)$ is Eulerian if and only if $n$ is odd.
\end{theorem}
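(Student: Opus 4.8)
The plan is to use the criterion recorded just before the theorem: since $\mathcal{G}_e(G)$ is connected (Section 2), it is Eulerian precisely when every vertex has even degree, so everything reduces to controlling the parity of the degrees. For a vertex $x$ I would work with its closed neighbourhood $N[x]$ in $\mathcal{G}_e(G)$, so that $\deg(x)=|N[x]|-1$. The crucial observation is that $N[x]$ is a union of cyclic subgroups of $G$: $y\in N[x]$ iff $y=x$ or there is a cyclic subgroup $\langle z\rangle$ with $x,y\in\langle z\rangle$. Two consequences I would extract: first, $e\in N[x]$ for every $x$ (take $\langle z\rangle=\langle x\rangle$); second, $N[x]$ is closed under the inversion map $\iota:g\mapsto g^{-1}$, because each $\langle z\rangle$ is. Hence $\iota$ restricts to an involution of the finite set $N[x]$, and $|N[x]|$ has the same parity as the number of its $\iota$-fixed points, namely the number of $y\in N[x]$ with $y^2=e$.

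For the ``if'' direction I would take $n$ odd. Then $G$ has no element of order $2$, so $y^2=e$ forces $y=e$; thus for every vertex $x$ the involution $\iota$ has exactly one fixed point on $N[x]$ (namely $e$, which lies in $N[x]$), so $|N[x]|$ is odd and $\deg(x)=|N[x]|-1$ is even. Since $\mathcal{G}_e(G)$ is connected, it is Eulerian.

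For the ``only if'' direction I would take $n$ even and, by Cauchy's theorem, fix $a\in G$ with $o(a)=2$. The $\iota$-fixed points of $N[a]$ are the elements $y\in N[a]$ with $y^2=e$, i.e. $e$ together with the involutions lying in $N[a]$. Here $a$ is such an involution, and it is the only one: if $b\neq a$ were an involution in $N[a]$, then $b\sim a$, so $a,b\in\langle z\rangle$ for some cyclic $\langle z\rangle$, forcing $\langle a\rangle$ and $\langle b\rangle$ to be two distinct subgroups of order $2$ in the cyclic group $\langle z\rangle$, which is impossible. Hence $N[a]$ has exactly the two $\iota$-fixed points $e$ and $a$, so $|N[a]|$ is even and $\deg(a)=|N[a]|-1$ is odd; therefore $\mathcal{G}_e(G)$ is not Eulerian.

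The argument is short once the inversion involution on closed neighbourhoods is in place, so I do not expect a serious obstacle; the only two points that need care are (i) writing $N[x]$ as a union of cyclic subgroups, which is what makes closure under $\iota$ and the membership $e\in N[x]$ transparent, and (ii) the elementary fact that a cyclic group has at most one element of order $2$, which is exactly what removes the second fixed point in the even case and pins down the parity of $|N[a]|$.
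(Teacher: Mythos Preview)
Your argument is correct, and it is genuinely different from the paper's. For the ``only if'' direction the paper simply observes that the identity vertex has degree $n-1$, which forces $n$ to be odd; you instead invoke Cauchy's theorem and show that an involution has odd degree. For the ``if'' direction the paper partitions the neighbours of a vertex $a$ into the blocks $\mathcal{G}en(x)$ described before Theorem~2.1 and writes $\deg(a)=\phi(o(a))+\phi(o(x_1))+\cdots+\phi(o(x_m))$, then uses that $\phi(m)$ is even for every odd $m>1$; you bypass this bookkeeping by noting that $N[x]$ is a union of cyclic subgroups containing $x$ and hence is stable under the inversion involution, so its parity is controlled by the fixed points $y^2=e$. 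Your approach is more uniform---the same involution handles both directions---and avoids the totient computation, while the paper's use of the identity vertex gives a one-line ``only if'' without appealing to Cauchy. The two arguments are at bottom cousins (the evenness of $\phi(m)$ for odd $m>1$ is itself a consequence of the fixed-point-free involution $g\mapsto g^{-1}$ on the generators), but the presentations are distinct.
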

\begin{proof}
Suppose that the graph $\mathcal{G}_e(G)$ is Eulerian. Since the vertex $e$ is edge connected with every other vertices of the graph $\mathcal{G}_e(G)$, it follows that the degree of $e$ is $n-1$. Now $n-1$ is even implies that $n$ is odd.

Conversely assume that $n$ is odd. Then the degree of $e$ in $\mathcal{G}_e(G)$ is $n-1$ and so even. Now we show that the degree of every non-identity element $a$ is even. The vertex set of the enhanced power graph can be written as $V(\mathcal{G}_e(G))=\bigcup_{x\in G }\mathcal{G}en(x)$, where $\mathcal{G}en(x)$ is the collection of all generators of the cyclic subgroup $<x>$. Now it follows  from the discussion before Theorem $2.1$, that all the vertices of the graph $\mathcal{G}_e(G)$ in $\mathcal{G}en(x)$ form a clique and if $x\in \mathcal{G}en(a)$, $y \in\mathcal{G}en(b)$ with $x \sim y$ then all the vertices in $\mathcal{G}en(a)$ are adjacent to all the vertices in $\mathcal{G}en(b)$. Since $\mathcal{G}en(x)$ contains $\phi(o(x))$  vertices and every vertex is adjacent to $e$, so the degree of a vertex $a$ in the graph $\mathcal{G}_e(G)$ is of the form $(\phi(o(a))-1)+\phi(o(x_1))+\phi(o(x_2))+\cdots+\phi(o(x_m))+1=\phi(o(a))+\phi(o(x_1))+\phi(o(x_2))+\cdots+\phi(o(x_m))$. Now $n$ is odd implies that $o(x)$ is odd and so  $\phi(o(x))$ is even for all $x\in G$. Thus  the degree of every vertex of the graph $\mathcal{G}_e(G)$ is even. Hence the enhanced power graph is Eulerian.
\end{proof}
\section{The deleted enhanced power graph of a group}
In this section we consider the subgraph $\mathcal{G}^*_e(G)$ obtained by deleting the vertex $e$ from the graph $\mathcal{G}_e(G)$. We call  $\mathcal{G}^*_e(G)$ the deleted enhanced power graph. Since the vertex $e$ is adjacent to every other vertices in $\mathcal{G}_e(G)$ for every group $G$, so it is expected to get new information on  the interplay of the group theoretic  properties of $G$ with the graph theoretic properties on consideration of the deleted enhanced power graph $\mathcal{G}^*_e(G)$. For every group $G$, the graph $\mathcal{G}_e(G)$ is connected, but $\mathcal{G}^*_e(S_3)$ is not connected whereas $\mathcal{G}^*_e(\mathbb{Z}_6)$  is connected. We prove the following results in this section with the help of the paper \cite{Shi}.
\begin{theorem}
Let $G$ be a finite $p-$group. Then the  graph $\mathcal{G}^*_e(G)$ is connected if and only if $G$ has a unique minimal subgroup.
\end{theorem}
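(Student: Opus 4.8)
The plan is to describe the connected components of $\mathcal{G}^*_e(G)$ explicitly, using one feature special to $p$-groups: every cyclic subgroup of $G$ is a cyclic $p$-group, and the subgroup lattice of a cyclic $p$-group is a chain — in particular it contains a \emph{unique} subgroup of order $p$. First I would reformulate adjacency: for non-identity $x,y$ in a $p$-group $G$, I claim $x\sim y$ in $\mathcal{G}_e(G)$ if and only if $\langle x\rangle\subseteq\langle y\rangle$ or $\langle y\rangle\subseteq\langle x\rangle$. Indeed, $x\sim y$ means $x,y\in\langle z\rangle$ for some $z\in G$, and then $\langle x\rangle,\langle y\rangle$ are subgroups of the cyclic $p$-group $\langle z\rangle$, hence comparable; the converse is immediate by taking $z$ to be whichever of $x,y$ generates the larger subgroup.

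Next, to each non-identity $x\in G$ I would attach the unique subgroup $m(x)$ of order $p$ contained in $\langle x\rangle$. By the reformulation, if $x\sim y$ in $\mathcal{G}^*_e(G)$ then one of $\langle x\rangle,\langle y\rangle$ contains the other, so their unique order-$p$ subgroups coincide, i.e.\ $m(x)=m(y)$; thus $m$ is constant along edges, hence constant on each connected component of $\mathcal{G}^*_e(G)$. Conversely, if $m(x)=m(y)=\langle a\rangle$ with $o(a)=p$, then $a\in\langle x\rangle\cap\langle y\rangle$ and $a\neq e$, so $x\sim a\sim y$ and $x,y$ lie in a common component. Therefore the connected components of $\mathcal{G}^*_e(G)$ are exactly the fibres of $m$, and so they are in bijection with the minimal subgroups of $G$. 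Since a finite $p$-group always has at least one subgroup of order $p$ (Cauchy), $\mathcal{G}^*_e(G)$ is connected if and only if $G$ has exactly one minimal subgroup, which is the assertion. If one wishes, the two implications can also be phrased directly: a unique minimal subgroup $\langle a\rangle$ lies in $\langle x\rangle$ for every non-identity $x$, so $a$ is a cone vertex and $\mathcal{G}^*_e(G)$ is connected; and if $\langle a\rangle\neq\langle b\rangle$ are distinct order-$p$ subgroups, then $m(a)\neq m(b)$, so no path in $\mathcal{G}^*_e(G)$ can join $a$ to $b$.

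There is essentially no hard obstacle here beyond the first step: the entire argument rests on the structure of the subgroup lattice of a cyclic $p$-group (totally ordered, with a single subgroup of order $p$). It is worth noting that, in contrast to Theorem 3.4, this proof does \emph{not} require the classification of finite $p$-groups with a unique minimal subgroup (cyclic groups, together with the generalized quaternion groups when $p=2$); connectivity of $\mathcal{G}^*_e(G)$ is equivalent to the purely internal condition of having one minimal subgroup, and the classification would only be invoked if one wanted to restate the conclusion in terms of a list of groups.
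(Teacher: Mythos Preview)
Your proof is correct. The forward direction (unique minimal subgroup $\Rightarrow$ connected) is exactly the paper's: the generator of the unique order-$p$ subgroup lies in every nontrivial cyclic subgroup and is therefore adjacent to everything. For the converse, however, the paper argues differently: assuming two distinct minimal subgroups $\langle x\rangle,\langle y\rangle$, it takes a shortest path $x=x_0\sim x_1\sim\cdots\sim x_n=y$ and, using comparability of $\langle x_1\rangle$ and $\langle x_2\rangle$ inside a common cyclic $p$-group, deduces $x\sim x_2$, contradicting minimality of the path. Your route instead packages the same comparability fact into the invariant $m(x)$ (the unique order-$p$ subgroup of $\langle x\rangle$) and shows the connected components of $\mathcal{G}^*_e(G)$ are precisely the fibres of $m$. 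This buys you a strictly stronger conclusion---an explicit description of all components, and hence that their number equals the number of minimal subgroups---whereas the paper's shortest-path argument yields only the dichotomy connected/disconnected. Both arguments rest on the identical structural input (the subgroup lattice of a cyclic $p$-group is a chain), so the difference is one of organization rather than depth.
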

\begin{proof}
Suppose that  $G$ has a unique minimal subgroup,  say $H$.  Now $|G|=p^t$ implies that $|H|=p$ and $H=<x>$, where $p$ is  prime and $x\in G$.  Let $y\in V(\mathcal{G}^*_e(G) )$.
 Then $o(y)=p^t$, for some $t \in \mathbb{N}$. Now $o(x)$ divides $o(y)$ and $<x>$ is the unique minimal subgroup of $G$ implies that $<x>\subset<y>$. So $x\sim y$.  Hence the graph $\mathcal{G}^*_e(G)$ is connected.

Conversely suppose the graph $\mathcal{G}^*_e(G)$ is connected. If possible, assume that $G$ has two minimal subgroups  $<x>$ and $<y>$. Then $o(x)=o(y)=p$ implies that $x$ is not adjacent to $y$, by Theorem $2.1$.  Since $\mathcal{G}^*_e(G)$ is connected there exists a path say $x=x_0\sim x_1\sim x_2 \sim \cdots \sim x_n=y$ of least length between $x$ and $y$. Then $n\geq 2$.  Now $x\sim x_1$ and $x_1\sim x_2$ implies that there exists $z_1, z_2\in G$ such that $x, x_1 \in <z_1>$ and $x_1, x_2 \in <z_2>$. Again  the cyclic subgroup $<x>$ is minimal, so $<x>\subset<x_1>$. Now either $<x_1>\subset <x_2>$ or $<x_2>\subset <x_1>$. If $<x_1>\subset <x_2>$ then we have $x, x_2\in <x_2>$ i. e. $x\sim x_2$. Again if $<x_2>\subset <x_1>$ then $<x>\subset <x_1>$ and $<x_2>\subset <x_1>$ implies that $x, x_2\in <x_1>$. So $x\sim x_2$, a contradiction. Hence $G$ has a unique minimal subgroup.
\end{proof}
\begin{theorem}
Let  $|\pi(Z(G))|\geq 2$. Then the graph $\mathcal{G}^*_e(G)$ is connected.
\end{theorem}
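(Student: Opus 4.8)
The plan is to exploit Cauchy's theorem on the center: the hypothesis $|\pi(Z(G))|\geq 2$ hands us two distinct primes $p,q$ dividing $|Z(G)|$, hence (by Cauchy, applied to the abelian group $Z(G)$) central elements $a,b$ with $o(a)=p$ and $o(b)=q$. The idea is to make $a$ a single target to which every other non-identity vertex is joined by a path lying in $\mathcal{G}^*_e(G)$.

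First I would record the basic gluing fact: if $c$ is central of order $p$ and $y$ is any element whose order is coprime to $p$, then $c$ and $y$ commute, so $cy$ has order $p\cdot o(y)$; the cyclic group $\langle cy\rangle$ contains the power $(cy)^{o(y)}=c^{o(y)}$, which generates $\langle c\rangle$, and likewise it contains a generator of $\langle y\rangle$. Hence $c,y\in\langle cy\rangle$, so $c\sim y$ in $\mathcal{G}_e(G)$, and the edge survives in $\mathcal{G}^*_e(G)$ as long as $c,y\neq e$. Applying this with $c=a$, $y=b$ (orders $p$ and $q$, coprime) gives $a\sim b$ in $\mathcal{G}^*_e(G)$.

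Next, let $x$ be an arbitrary non-identity vertex. Choose a prime $r$ dividing $o(x)$ and set $y=x^{o(x)/r}$, so $o(y)=r$ and $y\in\langle x\rangle$; thus $x\sim y$ in $\mathcal{G}^*_e(G)$ (or $x=y$). Since $\{p,q\}$ has two elements, at least one of them is distinct from $r$. If $r\neq p$, the gluing fact with $c=a$ gives $y\sim a$, so $x\sim y\sim a$ is a path in $\mathcal{G}^*_e(G)$. If instead $r=p$, then $r\neq q$, and the gluing fact with $c=b$ gives $y\sim b$, so $x\sim y\sim b\sim a$ is such a path. Either way $x$ is connected to the fixed vertex $a$; as $x$ was arbitrary, $\mathcal{G}^*_e(G)$ is connected.

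There is no real obstacle here — the only point of care is the small case split according to whether the prime $r\mid o(x)$ coincides with $p$, resolved by routing through $b$ instead of $a$ — and one should check that every vertex appearing along these paths is non-identity so that the paths genuinely lie in $\mathcal{G}^*_e(G)$, which is immediate since $a,b,x,y$ all have order at least $2$. The essential content is just Cauchy's theorem together with the centrality of $a$ and $b$, which is precisely what $|\pi(Z(G))|\geq 2$ provides; note also that this gives a uniform conclusion with no hypothesis that $G$ be a $p$-group, in contrast to Theorem~$5.9$.
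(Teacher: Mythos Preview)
Your proof is correct and follows essentially the same approach as the paper: both hinge on the observation that a central element $c$ and any element $y$ of coprime order lie together in the cyclic group $\langle cy\rangle$ and are therefore adjacent in $\mathcal{G}^*_e(G)$, and both route every non-identity vertex to a fixed central hub through a suitable prime-order power. The only difference is organizational---you fix two prime-order central elements $a,b$ as hubs from the start, whereas the paper works with a single element $g\in Z(G)$ of maximal order and extracts the needed prime-order powers of $g$ inside its case split; your packaging is, if anything, a little cleaner.
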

\begin{proof}
The group $Z(G)$ is abelian, this implies that $|\mu(Z(G))|=1$. Let $\mu(Z(G))=t$ and  $g\in Z(G)$,  such that $o(g)=t$. Now we show that for each $x\in G\setminus \{e\}$  there is a path between $x$ and $g$.

Case1: Let $\pi(o(x))=\pi(t)$. Since $|\pi(t)|\geq 2$, it follows there are two distinct primes $p$ and $q$ in $\pi(t)$. Now $\pi(o(x))=\pi(t)$ implies that $o(x^r)=p$ and $o(g^s)=q$ for some natural numbers $r, s$. Then  $gcd(o(x^r, o(g^s)))=1$ and $g \in Z(G)$ implies that . Now $o(x^rg^s)=pq$ and $x^r, g^s \in <x^rg^s>$. Hence we have a path $x\sim x^r\sim g^s \sim g$ in $\mathcal{G}^*_e(G)$.

Case2: Let $\pi(o(x))\neq \pi(t)$. In this case there exists a prime $p\in \pi(o(x))\setminus \pi(t)$ or $p\in \pi(t)\setminus \pi(o(x))$.  Consider  the case $p\in \pi(o(x))\setminus \pi(t)$, the other case is similar. Let $q\neq p$ be a prime and $q \in \pi(t)$.Then similar as above there exists $r, s \in \mathbb{N}$ such that $o(x^r)=q$ and $o(g^s)=p$, and we have a path $x\sim x^r \sim g^s \sim g$.

Hence that graph $\mathcal{G}^*_e(G)$ is connected.
\end{proof}
\begin{theorem}
Let $|\pi(G)|\geq 2$ and $|Z(G)|=p^t$ for some prime $p\in \pi(G)$. Then the graph $\mathcal{G}^*_e(G)$ is connected if and only if for some non-central element $x$ of order $p$ there exists a non $p-$element $g$  such that $x\sim g$ in the graph $\mathcal{G}^*_e(G)$.
\end{theorem}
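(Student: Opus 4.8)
The plan is to pin down one dominant connected component of $\mathcal{G}^*_e(G)$ and then show that connectedness is controlled entirely by the elements of order exactly $p$. Since $Z(G)$ is a nontrivial $p$-group, I would fix $z\in Z(G)$ with $o(z)=p$. For every $w\in G$ of order coprime to $p$ one has $\langle z\rangle\cap\langle w\rangle=\{e\}$ and $z$ central, so $\langle z,w\rangle=\langle z\rangle\times\langle w\rangle$ is cyclic and $z\sim w$; and as $|\pi(G)|\geq 2$ there is $h\in G$ of prime order $q\neq p$, so $z\sim h$. Let $C_0$ denote the component of $h$. Then $C_0$ contains every element of order coprime to $p$ (each is adjacent to $z$), every non-$p$-element $g$ (a suitable power $g^m\in\langle g\rangle$ has order coprime to $p$ and $g^m\neq e$, so $g\sim g^m\sim z$), and every non-identity central element $c$ (being a $p$-element, $\langle c\rangle$ has a unique minimal subgroup, which is generated by a central element joined to $h$, and $c$ is joined to it). Hence the only vertices that can fail to lie in $C_0$ are non-central $p$-elements, and $\mathcal{G}^*_e(G)$ is connected if and only if every non-central $p$-element lies in $C_0$.

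Next I would reduce to order $p$. If $u$ is a non-central $p$-element of order $p^k$, then $u\sim u^{p^{k-1}}$ and $u^{p^{k-1}}$ has order $p$; when $u^{p^{k-1}}$ is central it lies in $C_0$ (so does $u$), and when it is non-central we have $u\in C_0\Leftrightarrow u^{p^{k-1}}\in C_0$. Thus $\mathcal{G}^*_e(G)$ is connected if and only if every non-central element of order $p$ lies in $C_0$. Finally I would show that a non-central element $x$ of order $p$ lies in $C_0$ precisely when it is adjacent to some non-$p$-element: the backward implication is immediate since non-$p$-elements lie in $C_0$, and for the forward one I would take a shortest path $x=x_0\sim x_1\sim\cdots\sim x_n$ from $x$ to $h$ and let $j$ be least with $x_j$ not a $p$-element. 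A short induction -- using that $o(x)=p$ is the minimal possible order and that two cyclic $p$-subgroups of a common cyclic group are comparable -- gives $x\in\langle x_i\rangle$ for all $i<j$, so the cyclic subgroup realising the edge $x_{j-1}\sim x_j$ contains $x$ as well, whence $x\sim x_j$ with $x_j$ a non-$p$-element. Stringing these equivalences together gives the characterisation, in particular the ``only if'' direction of the theorem.

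The ``if'' direction is where the real work lies. Starting from one non-central element $x_0$ of order $p$ with a non-$p$-neighbour $g_0$, one knows $x_0\in C_0$, and the task is to force \emph{every} non-central element of order $p$ into $C_0$. The plan is to transport this witness: replace $g_0$ by a commuting partner $h_0$ of prime order $q\neq p$ obtained from the $p'$-part of $g_0$, so that $x_0,h_0\in\langle x_0h_0\rangle$, and then connect an arbitrary non-central element $x$ of order $p$ to $x_0$ (or directly to $C_0$), using the central element $z$ through the elements $xz$, $x_0z$ and their cyclic extensions, together with conjugation carrying $x_0$ around its conjugacy class.

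I expect this transport step to be the main obstacle. It is the only point at which the structure of $G$ beyond ``$Z(G)$ is a $p$-group'' genuinely enters, and deciding which non-central elements of order $p$ can lie in the same component as $x_0$ is exactly the delicate question; this is the part of the argument that will need to be written with the most care.
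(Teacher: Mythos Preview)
Your first two paragraphs are correct and in fact constitute a complete proof of the theorem as the paper intends it --- but you have misread the quantifier. Despite the phrase ``for some'' in the statement, the paper's own proof makes clear that the condition is meant to be universal: \emph{for every} non-central element $x$ of order $p$ there exists a non-$p$-element $g$ with $x\sim g$. (In the paper's ``only if'' argument the contradiction hypothesis is ``there is a non-central $x$ of order $p$ adjacent to no non-$p$-element'', and the converse begins ``for every non-central element $x$ of order $p$\ldots''.) With that reading, your chain of equivalences
\[
\mathcal{G}^*_e(G)\text{ connected}\ \Longleftrightarrow\ \text{every non-central }x\text{ of order }p\text{ lies in }C_0\ \Longleftrightarrow\ \text{every such }x\text{ has a non-}p\text{-neighbour}
\]
is exactly the theorem, and your shortest-path argument for the last equivalence is the same mechanism the paper uses (minimality of $o(x)=p$ forces $x\in\langle x_i\rangle$ along an initial $p$-segment of the path). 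So under the correct interpretation your proof is essentially the paper's, organised somewhat more cleanly around the dominant component $C_0$.

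The ``transport step'' you worry about in the third paragraph is therefore unnecessary --- and, more to the point, it cannot succeed, because the existential version of the statement is false. Take $G=S_3\times D_4$, so $Z(G)=\{e\}\times Z(D_4)$ has order $2$, $p=2$, and $|\pi(G)|=2$. The element $(e,s)$ is non-central of order $2$ and lies in the cyclic group $\langle((123),s)\rangle$ of order $6$, so it is adjacent to the non-$2$-element $((123),s)$; thus the existential condition holds. But $((12),e)$ is a non-central involution contained in no cyclic subgroup other than its own (any $(\sigma,d)$ with $((12),e)\in\langle(\sigma,d)\rangle$ forces $\sigma=(12)$ and $d^k=e$ for odd $k$, hence $d=e$), so $((12),e)$ is isolated in $\mathcal{G}^*_e(G)$ and the graph is disconnected. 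No amount of conjugation or use of the central involution will connect $((12),e)$ to $C_0$. Drop the third paragraph and read the hypothesis as ``for every''; you are then done.
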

\begin{proof}
First suppose that the graph $\mathcal{G}^*_e(G)$ is connected. Let if possible,  there be  a  non-central $p-$element $x$ of order $p$ such that for every non $p-$element $g, x$ is not edge connected with $g$. Let $z\in Z(G)$ with $o(z)=p$.  Let there is a path $P: x=x_0\sim x_1 \sim x_2 \sim \cdots \sim x_k =z$ with minimal length. Let $x \sim z$, then there exists an element $z_1 \in G$ such that $x, z \in <z_1>$. Again $o(x)=p$ and $Z(G)$ is a $p-$group implies that $o(z)=p^t$ and $x\in <z>\subset Z(G)$, i. e. $x \in Z(G)$, a contradiction. So $x, z$ are not edge connected in the graph $\mathcal{G}^*_e(G)$.

We assume that $k >1$. Since $x\in c_1$ there exists $z_2\in G$ such that $x, c_1 \in <z_2>$. Now by assumption $c_1$ is a $p-$element of $G$. Again $o(x)=p$ and $x, c_1 \in <z_2>$ implies that $<x>\subset <g>$. Since $c_1\sim c_2$ there exists $z_3 \in G$ such that $c_1, c_2 \in <z_3>$. So $<x>\subset <c_1>$ and $c_1, c_2 \in <z_3>$ implies that $x, c_2 \in <z_3>$, i. e. $x \sim c_2$, a contradiction.

Conversely, for every non-central element $x$ of order $p$ there exists a non $p-$element $g$ such that $x \sim g$ in the graph $\mathcal{G}^*_e(G)$. We show that for each $g \in G\setminus Z(G)$ there is a path from $g$ to each non-trivial element of $Z(G)$.

Case1: First suppose that $g$ is not a $p-$element. Suppose $q\in \pi(G)\setminus \{ p\}$ such that $q$ divides $o(g)$ and $o(g^t)=q$, for some $t \in \mathbb{N}$. Let $z\in Z(G)$ and $o(z)=p^r, r \in \mathbb{N}$. Now proceeding as the first part we have $g\sim g^t \sim z$.

Case2: Let $g$ is a $p-$element. Then there is a positive integer $s$ such that $o(g^s)=p$. Let $x$ be a $\acute{p}-$element of $G$, then $x \sim g^s \sim g$. Now $x$ is not a $p-$element, so by case1 there is a path between $x$  and any non-trivial element of $Z(G)$. Hence there is a path $g$ and every non-trivial element of $Z(G)$. Now suppose that $g^s$ does not belong to $Z(G)$. Now by assumption there is a non $p-$element $x_1$ in $G$  such that $g^s \sim x_1$. Again $g\sim g^s \sim x_1$. So by case1 there is a path between $g$ and all non-trivial elements in $Z(G)$.
\end{proof}
\begin{theorem}
The following conditions are equivalent.
\begin{enumerate}
\item
$o(a)<4$ for every $a \in G$;
\item
$\mathcal{G}^*_e(G)$ has no cycle;
\item
$\mathcal{G}^*_e(G)$ is bipartite;
\item
$\mathcal{G}^*_e(G)$ is a forest.
\end{enumerate}
\end{theorem}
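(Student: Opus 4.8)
The plan is to prove $(1)\Rightarrow(4)$ carefully, to note that $(4)\Rightarrow(2)$ and $(4)\Rightarrow(3)$ are immediate general facts about graphs (a forest has no cycle, and a graph with no cycle has no odd cycle, hence is bipartite), and then to close the chain of equivalences with $(2)\Rightarrow(1)$ and $(3)\Rightarrow(1)$, both obtained from a single observation. This yields $(1)\Leftrightarrow(4)\Leftrightarrow(2)$ and $(1)\Leftrightarrow(3)$, which is everything.

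The observation behind the converse directions is this: if $(1)$ fails, choose $a\in G$ with $o(a)\geq 4$. Every element of the cyclic group $\langle a\rangle$ is a power of $a$, so the $o(a)-1\geq 3$ vertices of $\langle a\rangle\setminus\{e\}$ are pairwise adjacent in $\mathcal{G}^*_e(G)$ by the definition of the enhanced power graph; thus $\mathcal{G}^*_e(G)$ contains a triangle. A triangle is a cycle, which contradicts $(2)$, and it is an odd cycle, which no bipartite graph contains, so it contradicts $(3)$. Hence $(2)\Rightarrow(1)$ and $(3)\Rightarrow(1)$.

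For $(1)\Rightarrow(4)$, assume $o(x)\leq 3$ for every $x\in G$, i.e.\ $\pi_e(G)\subseteq\{1,2,3\}$. I would work with the decomposition $V(\mathcal{G}^*_e(G))=\bigcup_{x\neq e}\mathcal{G}en(x)$ and the facts recalled just before Theorem $2.1$: each $\mathcal{G}en(x)$ induces a clique, necessarily of size $\phi(o(x))\in\{1,2\}$, and two elements lying in different $\mathcal{G}en$-classes are adjacent only when the two whole classes are mutually adjacent. So it suffices to rule out edges between distinct classes. An element $x$ of order $2$ is isolated in $\mathcal{G}^*_e(G)$: if $x\sim y$ for some non-identity $y\neq x$, then $x,y\in\langle z\rangle$ for some $z\in G$, and since $y\notin\{e,x\}=\langle x\rangle$ we get $|\langle z\rangle|>2$; as $2\mid|\langle z\rangle|$, this forces $o(z)\geq 4$, contradicting $(1)$. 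Moreover, two $\mathcal{G}en$-classes coming from distinct cyclic subgroups of order $3$ are non-adjacent by Theorem $2.1$. Since these are the only cases, $\mathcal{G}^*_e(G)$ is a disjoint union of copies of $K_1$ and $K_2$, hence a forest; this gives $(4)$.

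The only delicate point is ruling out cross-class edges in the step $(1)\Rightarrow(4)$, and that is exactly where the hypothesis $o(x)\leq 3$ enters (as opposed to $o(x)\leq 4$, the threshold relevant to $\mathcal{G}_e(G)$ itself in Section $4$, or even exponent $2$, which was needed for the tree/forest characterisation of $\mathcal{G}_e(G)$ in Corollary $2.3$); everything else is bookkeeping on top of Theorems $2.1$ and $2.4$. It is worth sanity-checking the degenerate cases $G\cong\mathbb{Z}_2^k$ and $G=S_3$ against the conclusion, but both are subsumed by the general argument.
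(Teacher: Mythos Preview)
Your proof is correct and follows essentially the same approach as the paper: the converse uses the triangle $a\sim a^2\sim a^3\sim a$ when $o(a)\geq 4$, and the forward direction is the same case analysis on orders $2$ and $3$, which you organise via the $\mathcal{G}en$-decomposition to obtain the slightly sharper description of $\mathcal{G}^*_e(G)$ as a disjoint union of $K_1$'s and $K_2$'s. One point where your write-up is actually more careful than the paper: the paper asserts that the equivalence of $(2)$, $(3)$ and $(4)$ is ``direct'', but $(3)\Rightarrow(2)$ is not a general graph-theoretic fact (bipartite graphs may have even cycles); your explicit observation that the triangle is an \emph{odd} cycle is what closes $(3)\Rightarrow(1)$ properly.
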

\begin{proof}
Equivalences  of $(2), (4)$ and $(3)$ are direct.

$(1)\Rightarrow (2)$: Suppose that $o(a)<4$ for every $a \in G$. Since every finite cyclic group has unique subgroup of a particular order, so two element of order $2$ can not be adjacent in $\mathcal{G}^*_e(G)$. Since $G$ has no element of order $6$, so an element of order $2$ can not be adjacent with an element of order $3$. Also it follows from Theorem $2.1$ that an element of order $3$ can not be adjacent with other two elements of order $3$. Hence $\mathcal{G}^*_e(G)$
can not have any cycle.

$(2) \Rightarrow (1)$: If $G$ has an element  $a $ such that $o(a)\geq 4$, then $a\sim a^2\sim a^3\sim a$ is a cycle in $\mathcal{G}^*_e(G)$.
\end{proof}

\noindent
\textbf{Open problem:}  Characterize all finite non-abelian groups $G$ such that $\mathcal{G}_e(G)$ has the cone property.

\noindent
\textbf{Acknowledgement:}  The authors acknowledge Mr.Sajal Kumar  Mukherjee for proposing the Theorems $3.1, 3.2, 3.3$.  The first author is partially  supported by UGC-JRF grant, India.

\bibliographystyle{amsplain}

\end{document}